\newtheorem{theorem}{Theorem}
\newcommand{\argmax}{\mathop{\mathrm{argmax}}}
\newcommand{\argmin}{\mathop{\mathrm{argmin}}}
\def\by{\mathbf{y}}
\def\bx{\mathbf{x}}
\def\bX{\mathbf{X}}
\def\bA{\mathbf{A}}
\def\bD{\mathbf{D}}
\def\bI{\mathbf{I}}
\def\br{\mathbf{r}}
\def\bs{\mathbf{s}}
\def\bp{\mathbf{p}}
\def\bu{\mathbf{u}}
\def\bb{\mathbf{b}}
\def\bbeta{{\bm\beta}}
\def\btheta{{\bm\theta}}
\def\cA{\mathcal{A}}
\def\cT{\mathcal{T}}
\def\half{\frac{1}{2}}
\def\hbeta{\hat{\beta}}
\def\hu{\hat{u}}
\def\hbbeta{\hat{\bbeta}}
\def\hbtheta{\hat{\btheta}}
\def\hbu{\hat{\bu}}
\def\tmax{\mathrm{max}}
\def\sign{\mathrm{sign}}
\begin{document}
\title{Strong Rules for Discarding Predictors in Lasso-type Problems} 
\author{Robert Tibshirani,
  \footnote{ Departments of Statistics and  Health Research and Policy,
        Stanford University,
        Stanford CA 94305,
        USA. E-mail: tibs@stanford.edu}                                    
\and
Jacob Bien,                                 
\and
Jerome Friedman,
\and
Trevor Hastie,         
\and
Noah Simon,                                 
\and
Jonathan Taylor,
\and
Ryan Tibshirani
}

\maketitle

\begin{abstract}
We consider rules for discarding predictors in lasso regression
and related problems, for computational efficiency.
\citeasnoun{safe} propose ``SAFE'' rules, based on univariate inner products
between each predictor and the  outcome,  that guarantee  
a coefficient will be zero in the solution vector. This provides  a
reduction in the number of variables that need to be entered into the optimization.  
 In this paper, we propose 
{\it strong rules} that are not foolproof but rarely fail in
practice. These are very simple, and can be complemented with
simple checks of the Karush-Kuhn-Tucker (KKT) conditions to ensure  
that the exact solution to the convex problem is delivered. These rules
offer a substantial savings in both computational time and memory,
for a variety of statistical optimization problems. 
\end{abstract}

\section{Introduction}
\label{sec:intro}
Our focus here is on statistical models 
fit using $\ell_1$
regularization. We start with  penalized linear regression.
Consider a problem with $N$
observations and $p$ predictors,
and let $\by$ denote the $N$-vector of outcomes,
and $\bX$ be the $N\times p$ matrix of predictors, with
$j$th column $\bx_j$ and $i$th row $x_i$. For a set of indices 
$\cA = \{j_1,\ldots j_k\}$, we write $\bX_{\cA}$ to denote
the $N \times k$ submatrix $\bX_\cA = [\bx_{j_1},\ldots \bx_{j_k}]$,
and also $\bb_\cA = (b_{j_1},\ldots b_{j_k})$ for a vector $\bb$.
We assume that the predictors and outcome are centered, so we can 
omit an intercept term from the model.

The lasso \citeasnoun{lasso} solves the optimization problem
\begin{equation}
\label{eq:lasso}
\hbbeta = \argmin_{\bbeta} \; 
\half \|\by-\bX\bbeta\|_2^2 +\lambda\|\bbeta\|_1,
\end{equation}
where $\lambda \geq 0$ is a tuning parameter.
There has been considerable work in the past few years 
deriving fast algorithms for this problem, especially for
large values of $N$ and $p$.
A main reason for using the lasso is that the $\ell_1$ penalty 
tends to give exact zeros in $\hbbeta$, and therefore it performs a
kind of variable selection. Now suppose we knew, a priori to
solving \eqref{eq:lasso}, that a subset of the variables $S \subseteq 
\{1,\ldots p\}$ will have zero coefficients in the solution, that is,
$\hbbeta_S = 0$.  
Then we could solve problem \eqref{eq:lasso} with the design matrix
replaced by $\bX_{S^c}$, where $S^c = \{1,\ldots p\} \setminus S $,
for the remaining coefficients $\hbbeta_{S^c}$. 
If $S$ is relatively large, then this could result in a 
substantial computational savings.  
   
\citeasnoun{safe} construct such a set $S$ of ``screened'' or
``discarded'' variables by looking at the inner products 
$|\bx_j^T \by|$, $j=1,\ldots p$. The authors use a clever argument to
derive a surprising set of rules called ``SAFE'', and show that
applying these rules can reduce both time and memory in the overall
computation. In a related work, \citeasnoun{Wu2009} study $\ell_1$
penalized logistic regression and build a screened set $S$ based on
similar inner products.  
However, their construction does not guarantee that the variables in 
$S$ actually have zero coefficients in the solution, and so after
fitting on $\bX_{S^c}$, the authors check the Karush-Kuhn-Tucker (KKT)
optimality conditions for violations. In the case of violations, they
weaken their set $S$, and repeat this process. 
Also, \citeasnoun{FL2008} 
study the screening of variables based on their inner products
in the lasso and related problems, but not from a optimization point
of view. Their screening rules may again set coefficients to zero that
are nonzero in the solution, however, the authors argue that under
certain situations this can lead to better performance in terms of
estimation risk. 

In this paper, we propose {\it strong rules} for discarding predictors
in the lasso and other problems that involve lasso-type penalties. 
These rules discard many more variables than the SAFE rules, but 
are not foolproof, because they can sometimes exclude variables from the model that
have nonzero coefficients in the solution. Therefore
 we rely on KKT conditions to ensure
that we are indeed computing the correct coefficients in the end. 
Our method is most effective for solving problems over a
grid of $\lambda$ values, because we can apply our strong rules
sequentially down the path, which results in a considerable reduction in 
computational time. Generally speaking, the power of the proposed
rules stems from the fact that: 
\begin{itemize}
\item the set of discarded variables $S$ tends to be large and violations
  rarely occur in practice, and
\item the rules are very simple and can be applied to
  many different problems, including the elastic net, lasso penalized
  logistic regression, and the graphical lasso. 
\end{itemize}
In fact, the violations of the proposed rules are so rare, that for a while
a group of us were trying to  establish that they were  foolproof.
At the same time, others in our group were looking for counter-examples
[hence the large number of co-authors!].
After  many flawed proofs, we finally found some counter-examples
to the strong sequential bound (although not to the basic global bound). Despite this, the strong sequential
bound turns out to be extremely useful in practice.

Here is the layout of this paper.
In Section \ref{sec:safe} we review the SAFE rules of
\citeasnoun{safe} for the lasso. The strong rules are introduced and
illustrated in Section \ref{sec:strong} for this same problem. 
We demonstrate 
that the strong rules rarely make mistakes in practice, especially
when $p \gg N$. 
In Section \ref{sec:viol}
we give a condition under which the strong rules do not
erroneously discard predictors (and hence the KKT conditions do not
need to be checked). 
 We discuss the elastic net and penalized logistic regression in
Sections  \ref{sec:en} and \ref{sec:logistic}. Strong rules for more
general convex optimization problems are given in Section
\ref{sec:general}, and these are applied to the graphical lasso.
In Section \ref{sec:glmnet}
 we  discuss how the strong sequential  rule can be used to 
speed up the solution of  convex optimization problems, while still
delivering the exact answer.  We also cover implementation details of the strong sequential rule 
in our {\tt glmnet} algorithm (coordinate descent for lasso penalized 
generalized linear models).
Section \ref{sec:discussion} contains some final discussion.

\section{Review of the SAFE rules}
\label{sec:safe}
The basic SAFE rule of \citeasnoun{safe} for the lasso is defined as
follows: fitting at $\lambda$, we discard predictor $j$ if
\begin{equation}
\label{eq:safe}
|\bx_j^T\by| < \lambda - \|\bx_j\|_2 \|\by\|_2 
\frac{\lambda_\tmax-\lambda}{\lambda_\tmax},
\end{equation}
where $\lambda_\tmax= \max_i |\bx_i^T\by|$ is the smallest $\lambda$ 
for which all coefficients are zero.
The authors derive this bound by looking at a dual of the
lasso problem \eqref{eq:lasso}. This is:
\begin{align}
\label{eq:dual}
\hbtheta = &\argmax_\btheta \; G(\btheta) = \half\|\by\|_2^2 - 
\half\|\by+\btheta\|_2^2 \\
\nonumber
& \text{subject to} \;\; |\bx_j^T \btheta| \leq \lambda \;\; 
\text{for} \;
j=1,\ldots p. 
\end{align}
The relationship between the primal and dual solutions is 
$\hbtheta = \bX \hbbeta - \by$, and 
\begin{equation}
\label{eq:pd}
\bx_j^T \hbtheta \in \begin{cases}
\{+\lambda\} & \text{if}\;\; \hbeta_j > 0 \\
\{-\lambda\} & \text{if}\;\; \hbeta_j < 0 \\
[-\lambda,\lambda] & \text{if}\;\; \hbeta_j = 0
\end{cases}
\end{equation}
for each $j=1,\ldots p$. Here is a sketch of the argument:
first we find a dual feasible point of the form $\btheta_0=s\by$,
($s$ is a scalar), and hence $\gamma=G(s\by)$ represents a lower 
bound for the value of $G$ at the solution. Therefore we can add the
constraint $G(\btheta)\geq \gamma$ to the dual problem \eqref{eq:dual}
and nothing will be changed. For each predictor $j$, we then find 
\begin{equation*}
m_j = \argmax_{\btheta} \;
|\bx_j^T\btheta| \;\; \text{subject to} \; G(\btheta)\geq \gamma. 
\end{equation*}
If $m_j < \lambda$ (note the strict inequality), 
then certainly at the solution $|\bx_j^T \hbtheta| < \lambda$, 
which implies that $\hbeta_j=0$ by \eqref{eq:pd}.  Finally, noting that 
$s=\lambda/\lambda_\tmax$ produces a dual feasible point and rewriting the condition $m_j < \lambda$
gives the rule \eqref{eq:safe}. 

In addition to the basic SAFE bound, the authors also derive a more
complicated but somewhat better bound that they call ``recursive
SAFE'' (RECSAFE). 
As we will show,
the SAFE
rules have the advantage that they will never discard a
predictor when its coefficient is truly nonzero. However, they discard
far fewer predictors than the strong sequential rule, introduced in the
next section.

\section{Strong screening rules}
\label{sec:strong}
\subsection{Basic and strong sequential rules}
Our basic (or global) {\it strong rule} for the lasso problem (\ref{eq:lasso}) discards predictor $j$ 
if  
\begin{equation}
\label{eq:strong}
|\bx_j^T \by| < 2\lambda - \lambda_\tmax,
\end{equation} 
where as before $\lambda_\tmax = \max_j |\bx_j^T \by|$.

 When the
predictors are standardized ($\|\bx_j\|_2=1$ for each $j$), it is not
difficult to see that the right hand side of \eqref{eq:safe} is always
smaller than the right hand side of \eqref{eq:strong}, so that in this
case the SAFE rule is always weaker than the basic strong rule. This follows 
since $\lambda_{\tmax} \leq \|\by\|_2$, so that  
\begin{equation*}
\lambda - \|\by\|_2 
\frac{\lambda_\tmax-\lambda}{\lambda_\tmax}
\; \leq \; \lambda - (\lambda_\tmax-\lambda)
\; = \; 2\lambda-\lambda_\tmax.
\end{equation*}
 Figure \ref{fig:dual} illustrates the SAFE and basic strong rules in an example.
\begin{figure}
\begin{center}
\begin{psfrags}
\psfrag{lm}{$\lambda_{max}$}
\psfrag{la}{$\lambda$}
\psfrag{lambda}{$\lambda$}
\includegraphics[width=3.0in]{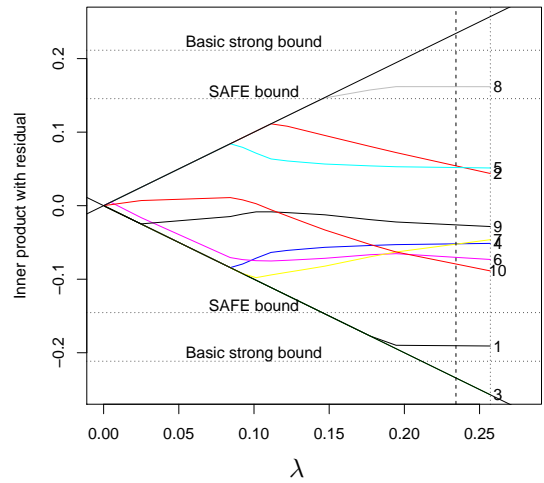}
\end{psfrags}
\end{center}
\caption[fig:dual]{\em SAFE and basic strong  bounds in an example
with 10 predictors, labelled at the right. The plot shows the inner product of each predictor
with the current residual, with the predictors in the model having
maximal inner product equal to $\pm\lambda$.   The dotted vertical line is drawn at $\lambda_{max}$;
the broken vertical line is drawn at $\lambda$. The  strong rule keeps only
predictor \#3, while the SAFE bound keeps  predictors \#8 and \#1 as well.}
\label{fig:dual}
\end{figure}

When the predictors are not standardized, the ordering between the two
bounds is not as clear, but the strong rule still tends to discard more
variables in practice unless the predictors have wildly different
marginal variances.

While  (\ref{eq:strong}) is somewhat useful, its sequential version
is much more powerful.
Suppose that we have already computed the solution
$\hbbeta(\lambda_0)$ at $\lambda_0$, and wish to discard predictors
for a fit at $\lambda<\lambda_0$. Defining the residual
$\br=\by-\bX\hbbeta(\lambda_0)$,  our {\it strong sequential
  rule} discards predictor $j$ if
\begin{equation}
\label{eq:seqstrong}
|\bx_j^T \br| < 2\lambda-\lambda_0.
\end{equation}
Before giving a detailed motivation for these rules,
we first demonstrate their utility.
Figure \ref{fig:fig1} shows some  examples of the
applications of the SAFE and strong rules. There are four scenarios with
various values of $N$ and $p$; in the first three panels,  the $\bX$ matrix is dense,
while it is sparse in the bottom right panel. The population correlation among the
feature is  zero, positive, negative and zero in the four panels.
Finally, 25\% of the coefficients are non-zero, with a standard Gaussian distribution.
In the plots, we are fitting along a path of decreasing $\lambda$ values
and the plots show the number of predictors left after screening
at each stage.
We see that the SAFE and RECSAFE rules only
exclude predictors near the beginning of the path.
The strong rules are  more effective: remarkably,  the strong sequential rule discarded almost all of the predictors that have coefficients of zero.
There were no violations of any of rules in any of the four scenarios.
\begin{figure}
\begin{center}
\includegraphics[width=5.5in]{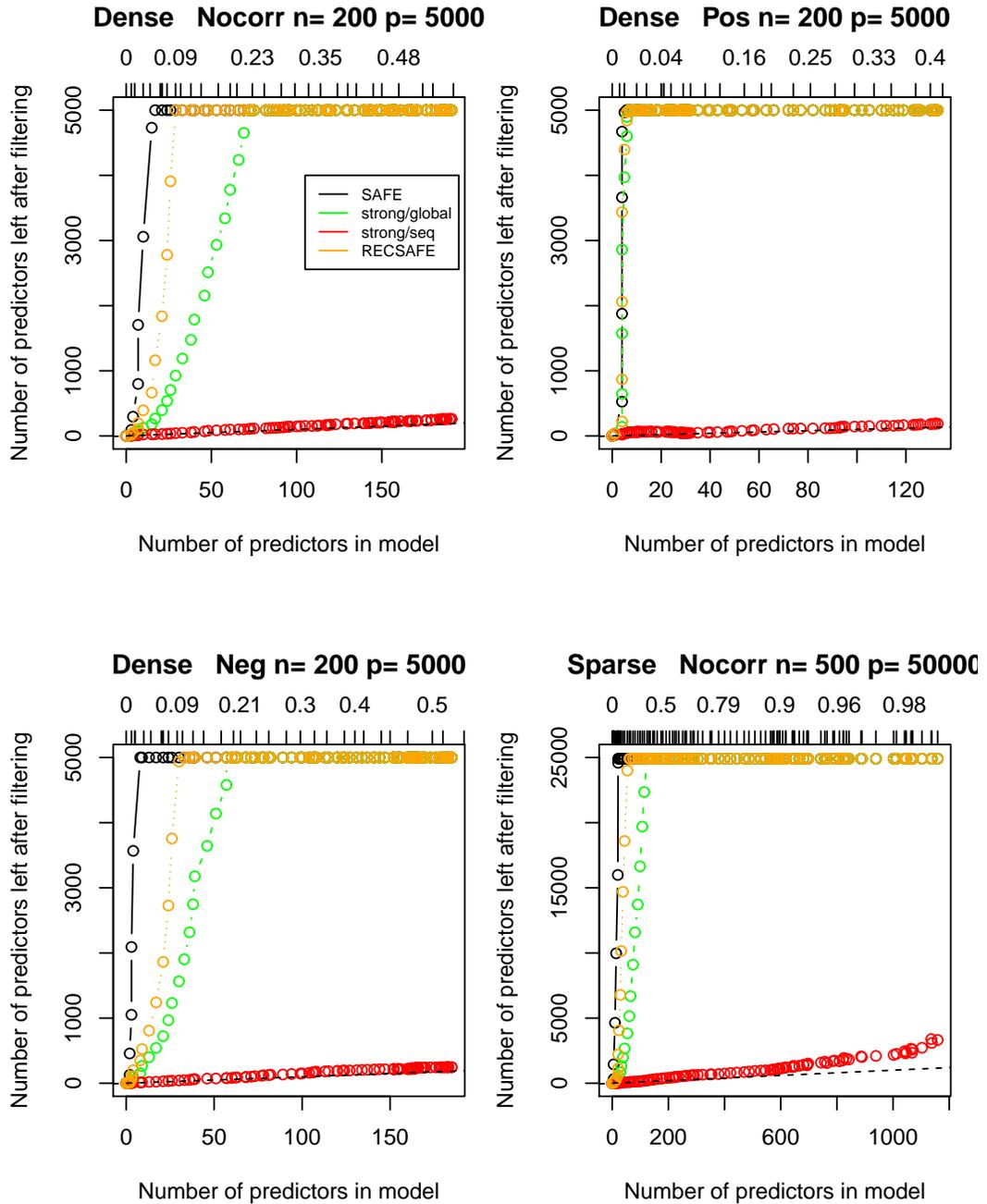}
\end{center}
\caption[fig:fig1]{\em Lasso regression: results of  different  rules
applied to four different scenarios.
There are four scenarios with
various values of $N$ and $p$; in the first three panels  the $\bX$ matrix is dense,
while it is sparse in the bottom right panel. The population correlation among the
feature is  zero, positive, negative and zero in the four panels.
Finally, 25\% of the coefficients are non-zero, with a standard Gaussian distribution.
In the plots, we are fitting along a path of decreasing $\lambda$ values
and the plots show the number of predictors left after screening
at each stage. A broken line with unit slope is added for reference.   The proportion of
variance
explained by the model is  shown along the top of the plot. 
There were no violations of any of the rules in any of the four scenarios.}
\label{fig:fig1}
\end{figure}

It is common practice to standardize the predictors before applying the
lasso, so that the penalty term makes sense.
This is what was done in the examples of Figure \ref{fig:fig1}.
But in some instances, one might not want to standardize the predictors,
and so in  Figure \ref{fig:fig11} we investigate the performance of the rules in this case.
In the left panel 
the population variance of each predictor is the same;
in the right panel it varies by a factor of 50. We see that in the latter case
the SAFE rules  outperform the basic strong rule, but the sequential
strong rule is still the clear winner.
 There were no violations in any of rules in either panel.
\begin{figure}
\begin{center}
\includegraphics[width=4.75in]{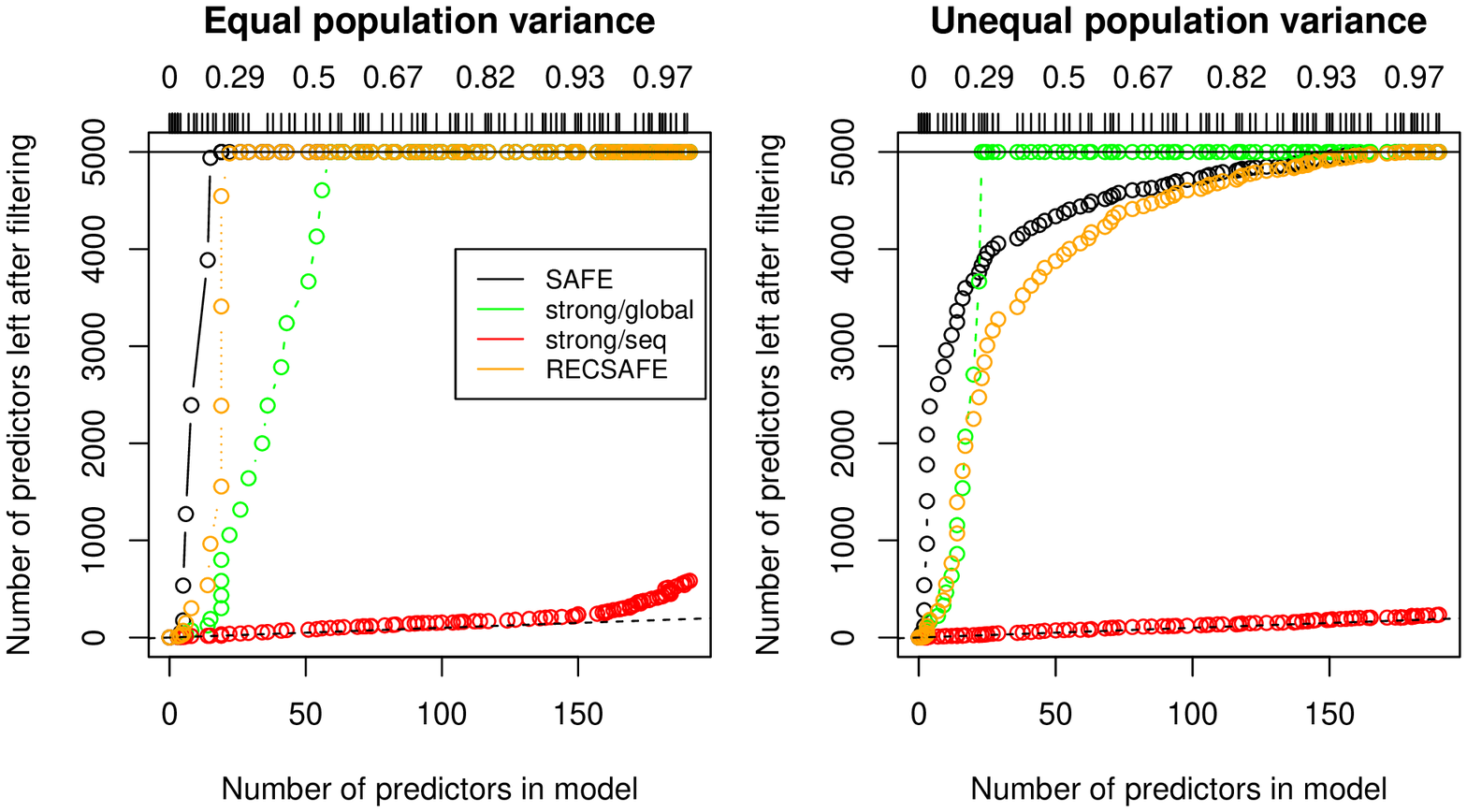}
\end{center}
\caption[fig:fig11]{\em Lasso regression: results of  different  rules
when the predictors are not standardized.
The scenario in the left panel is the same as in the top left panel of Figure \ref{fig:fig1}, except that the features are not standardized before fitting the lasso.
In the data generation for the right panel, each feature is scaled by a random factor between
1 and 50, and again, no standardization is done.}
\label{fig:fig11}
\end{figure}

\subsection{Motivation for the strong rules}
We now give some motivation for the strong rule (\ref{eq:strong})
and later, the sequential rule
 (\ref{eq:seqstrong}).
We start with the KKT
conditions for the lasso problem \eqref{eq:lasso}. These are
\begin{equation}
\label{eq:kkt}
\bx_j^T (\by - \bX\hbbeta) = \lambda \cdot s_j
\end{equation} 
for $j=1,\ldots p$, where $s_j$ is a subgradient of
$\hbeta_j$:  
\begin{equation}
\label{eq:sg}
s_j \in \begin{cases}
\{+1\} & \text{if}\;\; \hbeta_j > 0 \\
\{-1\} & \text{if}\;\; \hbeta_j  < 0 \\
[-1,1] & \text{if}\;\; \hbeta_j  = 0.
\end{cases}
\end{equation}
Let $c_j(\lambda) = \bx_j^T \{\by - \bX\hbbeta(\lambda)\}$, 
where we emphasize the dependence on $\lambda$. Suppose in general
that we could assume
\begin{equation}
\label{eq:slopebound}
|c_j'(\lambda)| \leq 1,
\end{equation}
where $c_j'$ is the derivative with respect to
$\lambda$, and we ignore possible points of
non-differentiability. This would allow us to conclude that
\begin{align}
\label{eq:int1}
|c_j(\lambda_\tmax)-c_j(\lambda)|
&= \left| \int_\lambda^{\lambda_\tmax} 
c_j'(\lambda) \, d\lambda \right| \\
\label{eq:int2}
&\leq \int_\lambda^{\lambda_\tmax} 
|c_j'(\lambda)| \, d\lambda \\
\nonumber
&\leq \lambda_\tmax-\lambda,
\end{align}
and so
\begin{equation*}
|c_j(\lambda_\tmax)| < 2\lambda -\lambda_\tmax 
\;\Rightarrow\;
|c_j(\lambda)| < \lambda 
\;\Rightarrow\;
\hbeta_j(\lambda) = 0,
\end{equation*}
the last implication following from the KKT conditions, \eqref{eq:kkt}
and \eqref{eq:sg}. Then the strong rule \eqref{eq:strong} follows as
$\hbbeta(\lambda_\tmax) = 0$, 
so that $|c_j(\lambda_\tmax)| = |\bx_j^T \by|$. 

Where does the slope condition \eqref{eq:slopebound} come from? 
The product rule applied to
\eqref{eq:kkt} gives
\begin{equation}
\label{eq:pr} 
c_j'(\lambda) = s_j(\lambda) +
\lambda \cdot s_j'(\lambda),
\end{equation}
and as $|s_j(\lambda)| \leq 1$, condition \eqref{eq:slopebound} can be
obtained if we simply drop the second term above. For an active
variable, that is $\hbeta_j(\lambda)\not=0$, we have 
$s_j(\lambda) = \sign\{\hbeta_j(\lambda)\}$, and 
continuity of $\hbeta_j(\lambda)$ with respect to $\lambda$ implies
$s_j'(\lambda) = 0$. But $s_j'(\lambda) \not= 0$ for inactive
variables, and hence the bound \eqref{eq:slopebound} can fail, which
makes the strong rule \eqref{eq:strong} imperfect. It is from this
point of view---writing out the KKT conditions, taking a derivative
with respect to $\lambda$, and dropping a term---that we derive
strong rules for $\ell_1$ penalized logistic regression and more
general problems. 

In the lasso case, condition \eqref{eq:slopebound} has a  more
concrete interpretation. From \citeasnoun{lars}, we know that each
coordinate of the solution $\hbeta_j(\lambda)$ is a
piecewise linear function of $\lambda$, hence so is each inner
product $c_j(\lambda)$. Therefore $c_j(\lambda)$ is differentiable at
any $\lambda$ that is not a kink, the points at which variables enter
or leave the model. In between kinks, condition \eqref{eq:slopebound}
is really just a bound on the slope of $c_j(\lambda)$. The idea 
is that if we assume the absolute slope of $c_j(\lambda)$ is at most
1, then we can bound the amount that $c_j(\lambda)$ changes as we move 
from $\lambda_\tmax$ to a value $\lambda$. Hence if the initial
inner product $c_j(\lambda_\tmax)$ starts too far from the maximal
achieved inner product, then it cannot ``catch up'' in time. An
illustration is given in Figure \ref{fig:strong}. 

\begin{figure}[htb]
\begin{center}
\begin{psfrags}
\psfrag{l}{$\lambda$}
\psfrag{cjxxxxxxxxxx}{$c_j = \bx_j^T (\by - \bX \hbbeta)$}
\psfrag{l0-l1}{$\lambda_\tmax \hspace{-2pt}-\hspace{-2pt} \lambda_1$}
\psfrag{l1}{$\lambda_1$}
\psfrag{l0}{$\lambda_\tmax$}
\includegraphics[width=4in]{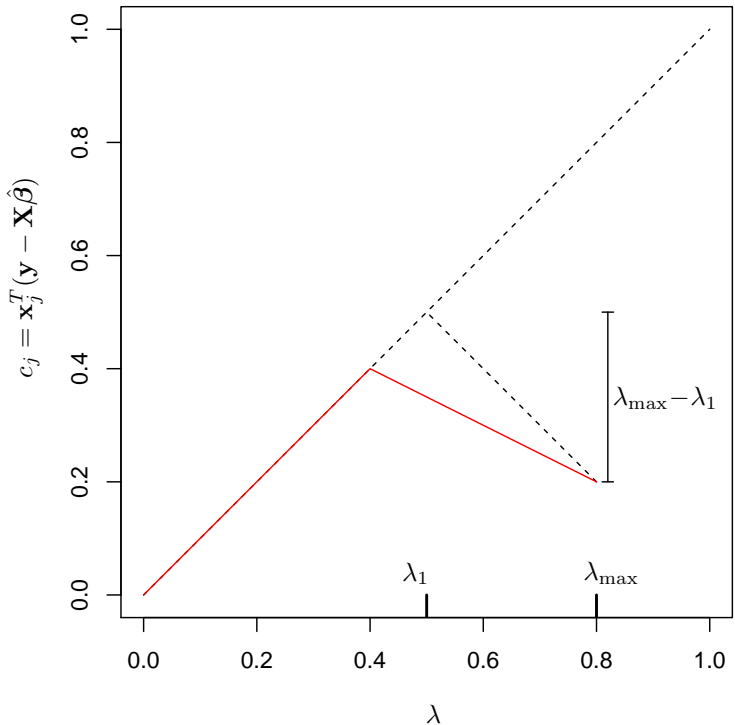}
\end{psfrags}
\caption[fig:strong]{\it Illustration of the slope bound
\eqref{eq:slopebound} leading to the strong rule
\eqref{eq:seqstrong}. 
The inner product $c_j$ is plotted in red 
as a function of $\lambda$, restricted to only one predictor for 
simplicity. The slope of $c_j$ between $\lambda_\tmax$ and 
$\lambda_1$ is bounded in absolute value by 1, so the most it can rise 
over this interval is $\lambda_\tmax-\lambda_1$. Therefore, if it
starts below 
$\lambda_1-(\lambda_\tmax-\lambda_1) = 2\lambda_1-\lambda_\tmax$, it
can not possibly reach the critical level by $\lambda_1$.}
\label{fig:strong}
\end{center}
\end{figure}

The argument for the strong bound (intuitively, an argument about
slopes), uses only local information and so it can be applied to
solving \eqref{eq:lasso} on a grid of $\lambda$ values. 
Hence by the same argument as before, the slope assumption
\eqref{eq:slopebound} leads to the strong sequential rule \eqref{eq:seqstrong}.

It is interesting to note that  
\begin{eqnarray}
|\bx_j^T\br|< \lambda
\label{condKKT}
\end{eqnarray}
is just the KKT condition for excluding a variable in the solution at $\lambda$. 
The strong sequential bound is $\lambda-(\lambda_{0}-\lambda)$
and we can think of the extra term $\lambda_{0}-\lambda$
as a buffer to account for the fact that $|\bx_j^T\br|$
may increase as we move from $\lambda_0$ to $\lambda$.
Note also that as $\lambda_0\rightarrow\lambda$, the  strong sequential rule
becomes the KKT condition (\ref{condKKT}), so that
in effect the sequential rule  at $\lambda_0$  ``anticipates'' the KKT conditions
at $\lambda$.

In summary, it turns out  that the key slope condition (\ref{eq:slopebound}) 
very often holds,
 but can be violated for short stretches, especially when $p\approx N$ and
 for small values
of $\lambda$ in the ``overfit'' regime of a lasso problem.
 In the next section  we provide an example
that shows a violation of the slope bound \eqref{eq:slopebound}, which
breaks the strong sequential rule \eqref{eq:seqstrong}. We 
also give a condition on the design matrix
$\bX$ under which the bound \eqref{eq:slopebound} is guaranteed to
hold. However in simulations in that section,
we find  that these violations are rare in practice
and virtually non-existent when $p>>N$.

\section{Some analysis of the strong rules}
\label{sec:viol}
\subsection{Violation of the slope condition}

Here we demonstrate a counter-example of both the slope bound
\eqref{eq:slopebound} and of the strong sequential rule
\eqref{eq:seqstrong}. We believe that a counter-example for the basic 
strong rule \eqref{eq:strong} can also be constructed, but we have not
yet found one. Such an example is  somewhat more difficult to
construct because it would require that the average slope 
exceed 1 from $\lambda_\tmax$ to $\lambda$, rather than
exceeding 1 for short stretches of $\lambda$ values.  

We took $N=50$ and $p=30$, with the entries of $\by$
and $\bX$ drawn independently from a standard normal
distribution. Then we centered $\by$ and the columns of $\bX$, and 
standardized the columns of $\bX$. As Figure \ref{fig:boundbreak}
shows, the slope of
$c_j(\lambda) =\bx_j^T\{\by-\bX\hbbeta(\lambda)\}$ 
is $c_j'(\lambda)=-1.586$ for all 
$\lambda \in [\lambda_1,\lambda_0]$,
where $\lambda_1=0.0244$, $\lambda_0=0.0259$, and $j=2$.  
Moreover, if we were to use the solution at
$\lambda_0$ to eliminate predictors for the fit at $\lambda_1$, then we
would eliminate the 2nd predictor based on the bound
\eqref{eq:seqstrong}. But this is clearly a problem, because the 2nd
predictor enters the model at $\lambda_1$. By continuity, we can
choose $\lambda_1$ in an interval around $0.0244$ and $\lambda_0$ in
an interval around $0.0259$, and still break the strong sequential
rule \eqref{eq:seqstrong}.  

\begin{figure}[p]
\begin{center}
\begin{psfrags}
\psfrag{l}{$\lambda$}
\psfrag{cjxxxxxxxxxx}{$c_j = \bx_j^T (\by - \bX \hbbeta)$}
\psfrag{bound}{$2\lambda_1-\lambda_0$} 
\psfrag{l1}{$\lambda_1$}
\psfrag{l0}{$\lambda_0$}
\includegraphics[width=5.75in]{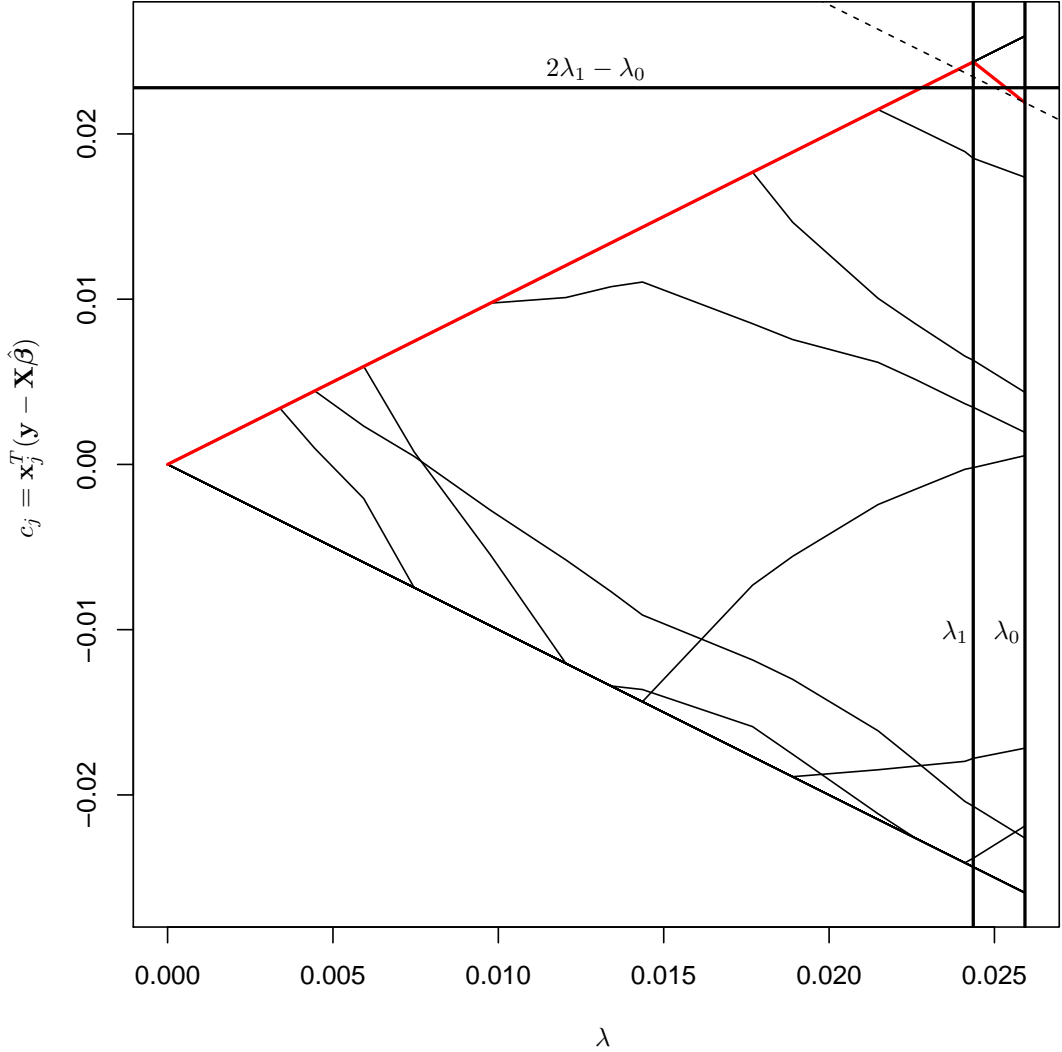}
\end{psfrags}
\caption[fig:boundbreak]{\it Example of a violation of
the slope bound \eqref{eq:slopebound}, which breaks the strong
sequential rule \eqref{eq:seqstrong}. The entries of $\by$ and $\bX$
were generated as independent, standard normal random variables with  
$N=50$ and $p=30$. (Hence there is no underlying signal.) 
The  lines with slopes $\pm \lambda$  are the envelop of maximal inner products 
achieved by predictors in the model for each $\lambda$.
For clarity we only show a short stretch of the solution path. 
The rightmost vertical line is drawn at $\lambda_0$, and we are
considering the new value $\lambda_1<\lambda_0$, the vertical line to
its left. The horizontal line is the bound \eqref{eq:slopebound}. 
In the top right part of the plot, the inner product path for the
predictor $j=2$ is drawn in red, and starts below the bound, but enters
the model at $\lambda_1$. The slope of the red segment between
$\lambda_0$ and $\lambda_1$ exceeds 1 in absolute value. A dotted line
with slope -1 is drawn beside the red segment for reference.}
\label{fig:boundbreak}
\end{center}
\end{figure}

\subsection{A sufficient condition for the slope bound}
\label{sec:dd}
\citeasnoun{genlasso} prove a general result that can be used to
give the following sufficient condition for the unit slope bound
\eqref{eq:slopebound}. 
Under this condition, both basic and strong sequential rules are guaranteed
not to fail.

Recall that a matrix $\bA$ is diagonally dominant if $|A_{ii}| \geq
\sum_{j\neq i} |A_{ij}|$ for all $i$. Their result gives us the
following: 

\begin{theorem}
Suppose that $\bX$ is $N\times p$, with $N \geq p$, and of full rank. 
If
\begin{equation}
\label{eq:dd}
(\bX^T\bX)^{-1}\;\mathrm{is}\;\mathrm{diagonally}\;\mathrm{dominant},
\end{equation}
then the slope bound \eqref{eq:slopebound} holds at all points
where $c_j(\lambda)$ is differentiable, for $j=1,\ldots p$, and hence
the strong rules \eqref{eq:strong}, \eqref{eq:seqstrong} never produce 
violations. 
\end{theorem}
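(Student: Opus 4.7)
The plan is to work piecewise on the lasso path. Away from the finitely many kinks, the active set $\cA = \cA(\lambda)$ and the sign vector $\bs_\cA$ are locally constant, and the stationarity conditions (\ref{eq:kkt}) on $\cA$ solve explicitly to
\begin{equation*}
\hbbeta_\cA(\lambda) \;=\; (\bX_\cA^T \bX_\cA)^{-1}\bigl(\bX_\cA^T \by - \lambda \bs_\cA\bigr),
\end{equation*}
while $\hbbeta_{\cA^c}(\lambda) = 0$. First I would differentiate this in $\lambda$ and plug the result into $c_j(\lambda) = \bx_j^T\{\by - \bX_\cA \hbbeta_\cA(\lambda)\}$ to obtain the closed form $c_j'(\lambda) = -\bx_j^T \bX_\cA (\bX_\cA^T \bX_\cA)^{-1} \bs_\cA$. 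For $j \in \cA$ this reduces to $\pm 1$ (and matches $s_j(\lambda)$ in (\ref{eq:pr}), confirming $s_j'=0$ there), so the slope bound (\ref{eq:slopebound}) is automatic. The work lies in bounding this for inactive $j$.

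Next I would translate the inactive-coordinate expression into quantities involving $(\bX^T \bX)^{-1}$ so that the diagonal dominance hypothesis can be brought to bear. Partition $\bX^T \bX$ according to $(\cA,\cA^c)$ and use the block-inversion formula: the $(\cA^c,\cA)$ block of $(\bX^T \bX)^{-1}$ equals $-\bS^{-1}\bX_{\cA^c}^T \bX_\cA(\bX_\cA^T \bX_\cA)^{-1}$, where $\bS = \bX_{\cA^c}^T(\bI - \bX_\cA(\bX_\cA^T \bX_\cA)^{-1}\bX_\cA^T)\bX_{\cA^c}$ is the Schur complement and the $(\cA^c,\cA^c)$ block equals $\bS^{-1}$. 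Consequently $\bx_j^T \bX_\cA (\bX_\cA^T \bX_\cA)^{-1}$ (for $j \in \cA^c$) can be read off, row by row, from $-\bS \cdot [(\bX^T \bX)^{-1}]_{\{j\},\cA}$, and hence
\begin{equation*}
c_j'(\lambda) \;=\; -\frac{\sum_{k \in \cA}\bigl[(\bX^T\bX)^{-1}\bigr]_{jk}\, s_k}{\bigl[(\bX^T\bX)^{-1}\bigr]_{jj}} \qquad (j \notin \cA),
\end{equation*}
after using the identity $\bS^{-1}_{jj} = [(\bX^T\bX)^{-1}]_{jj}$ to normalize. (The exact form of the normalization is where I expect the main bookkeeping effort; it is essentially the statement that the $j$th diagonal of the $\cA^c$--block inverse, restricted to the subproblem, equals the $j$th diagonal of the full inverse.)

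Once this identity is in hand, the diagonal dominance hypothesis (\ref{eq:dd}) gives $\sum_{k\ne j} |[(\bX^T\bX)^{-1}]_{jk}| \leq |[(\bX^T\bX)^{-1}]_{jj}|$, and combining this with $|s_k| = 1$ for $k \in \cA$ immediately yields $|c_j'(\lambda)| \leq 1$ for every inactive $j$. Since $c_j(\lambda)$ is continuous (piecewise linear, by the LARS characterization), the bound extends across the finitely many kinks, and integrating as in (\ref{eq:int1})--(\ref{eq:int2}) — between $\lambda_\tmax$ and $\lambda$ for the basic rule, or between $\lambda_0$ and $\lambda$ for the sequential rule — shows that $|c_j(\lambda_0)| < 2\lambda - \lambda_0$ forces $|c_j(\lambda)| < \lambda$, which by (\ref{eq:kkt})--(\ref{eq:sg}) forces $\hbeta_j(\lambda)=0$. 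Thus no violation of either (\ref{eq:strong}) or (\ref{eq:seqstrong}) can occur. The principal obstacle is the Schur-complement bookkeeping that converts the subproblem quantity $\bx_j^T \bX_\cA(\bX_\cA^T \bX_\cA)^{-1}\bs_\cA$ into a weighted sum of entries in a \emph{single row} of $(\bX^T\bX)^{-1}$ — this is precisely the calculation that the cited result of \citeasnoun{genlasso} packages for us.
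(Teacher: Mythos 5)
Your overall architecture---piecewise linearity of the path, the explicit active-set solution, $|c_j'(\lambda)|=1$ for active $j$, and the reduction of everything to the bound $|\bx_j^T\bX_\cA(\bX_\cA^T\bX_\cA)^{-1}\bs_\cA|\le 1$ for inactive $j$ (i.e.\ \eqref{eq:slopebound2})---is exactly what the boundary lemma of \citeasnoun{genlasso}, which the paper cites as a black box, actually establishes. But the linear-algebra step by which you bring in diagonal dominance is wrong. Write $M=(\bX^T\bX)^{-1}$. The block-inverse identities you quote combine to give $(M_{\cA^c\cA^c})^{-1}M_{\cA^c\cA}=-\bX_{\cA^c}^T\bX_\cA(\bX_\cA^T\bX_\cA)^{-1}$, so the correct statement is
\begin{equation*}
\bX_{\cA^c}^T\bX_\cA(\bX_\cA^T\bX_\cA)^{-1}\bs_\cA \;=\; -\,(M_{\cA^c\cA^c})^{-1}M_{\cA^c\cA}\,\bs_\cA,
\end{equation*}
which requires solving a linear system with the \emph{entire} inactive block $M_{\cA^c\cA^c}$; its $j$th coordinate is not $-\sum_{k\in\cA}M_{jk}s_k/M_{jj}$. (Row $j$ of the product of the Schur complement with $M_{\cA^c\cA}$ mixes all rows of $M_{\cA^c\cA}$, and the diagonal entry of the Schur complement is not $1/M_{jj}$. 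Equivalently, $-M_{j\cA}/M_{jj}$ gives the $\cA$-coefficients from regressing $\bx_j$ on \emph{all} other columns, not on $\bX_\cA$ alone.) A concrete counterexample: take
\begin{equation*}
\bX^T\bX=\begin{pmatrix}1&0&\half\\ 0&1&\half\\ \half&\half&1\end{pmatrix},\qquad \cA=\{1\},\quad j=2;
\end{equation*}
then $\bx_2^T\bx_1(\bx_1^T\bx_1)^{-1}s_1=0$, while $-M_{21}s_1/M_{22}=-s_1/3$. So your displayed formula for $c_j'(\lambda)$ is false in general, and the claim that the bound then follows ``immediately'' from the row-sum inequality does not stand.

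What is actually needed, once the correct identity is in place, is the lemma that for a diagonally dominant positive definite $M$, any partition $(I,B)$ of the indices and any sign vector $\bs_B$, one has $\|(M_{II})^{-1}M_{IB}\bs_B\|_\infty\le 1$. This is true but is not a one-line consequence of $|M_{jj}|\ge\sum_{k\ne j}|M_{jk}|$: it is proved by a maximum-principle argument on the linear system $M_{II}\bz=M_{IB}\bs_B$ (assume a coordinate $j^*$ attains $\|\bz\|_\infty>1$, examine row $j^*$, and derive a contradiction with diagonal dominance). That lemma is precisely the content of the proof of the boundary lemma in \citeasnoun{genlasso}, and the paper's own proof simply invokes it, identifying $c_j(\lambda)$ with the dual coordinates $\hu_j(\lambda)$ for the case $\bD=\bI$. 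If you substitute this lemma for your single-row normalization, the remainder of your argument---the $\pm1$ slope for active coordinates, integrating the slope bound from $\lambda_\tmax$ (or $\lambda_0$) down to $\lambda$, and noting that the kinks form a null set so the integrals in \eqref{eq:int1}--\eqref{eq:int2} are legitimate---is correct and coincides with the paper's.
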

\begin{proof}
\citeasnoun{genlasso} consider a generalized lasso problem 
\begin{equation}
\label{eq:genlasso}
\argmin_\bbeta \; \half\|\by-\bX\hbbeta\|_2^2 + \lambda\|\bD\bbeta\|_1,
\end{equation}
where $\bD$ is a general $m \times p$ penalty matrix. In the proof of
their ``boundary lemma'', Lemma 1, they show that if
$\mathrm{rank}(X)=p$ and $\bD(\bX^T\bX)^{-1}\bD^T$ is diagonally
dominant, then the dual solution $\hbu(\lambda)$ corresponding to
problem \eqref{eq:genlasso} satisfies
\begin{equation*}
|\hu_j(\lambda)-\hu_j(\lambda_0)| \leq |\lambda-\lambda_0|
\end{equation*}
for any $j=1,\ldots m$ and $\lambda,\lambda_0$. By piecewise linearity
of $\hu_j(\lambda)$, this means that $|\hu'_j(\lambda)| \leq 1$ at all
$\lambda$ except the kink points. Furthermore, when $\bD=\bI$, problem 
\eqref{eq:genlasso} is simply the lasso, and it turns out that the
dual solution $\hu_j(\lambda)$ is exactly the inner product
$c_j(\lambda) = \bx_j^T \{\by-\bX\hbbeta(\lambda)\}$. This proves the
slope bound \eqref{eq:slopebound} under the condition that
$(\bX^T\bX)^{-1}$ is diagonally dominant.

Finally, the kink points are countable and hence form a set of
Lebesgue measure zero. Therefore $c_j(\lambda)$ is differentiable
almost everywhere and the integrals in \eqref{eq:int1} and
\eqref{eq:int2} make sense. This proves the strong rules
\eqref{eq:strong} and \eqref{eq:seqstrong}.  
\end{proof}

We note a similarity between condition \eqref{eq:dd} and the positive
cone condition used in \citeasnoun{lars}. It is not hard to see that 
the positive cone condition implies \eqref{eq:dd}, and actually
\eqref{eq:dd} is a much weaker condition because it doesn't require
looking at every possible subset of columns. 

A simple model in which diagonal dominance holds is when the columns 
of $\bX$ are orthonormal, because then $\bX^T\bX = \bI$. But the
diagonal dominance condition \eqref{eq:dd} certainly holds outside of
the orthogonal design case. We give two such examples below.

\begin{itemize}
\item {\it Equi-correlation model.} Suppose that $\|\bx_j\|_2=1$ for
  all $j$, and $\bx_j^T \bx_k = r$ for all $j\not=k$. Then the inverse
  of $\bX^T \bX$ is 
\begin{equation*}
(\bX^T\bX)^{-1} = \bI\cdot\frac{1}{1-r} -\frac{1}{1-r} \left(
\frac{\mathbf{1}\mathbf{1}^T}{1+r(p-1)} \right),
\end{equation*}
where $\mathbf{1}$ is the vector of all ones. This is diagonally
dominant as along as $r \geq 0$.
\item {\it Haar basis model.} Suppose that the columns of $\bX$ form a
  Haar basis, the simplest example being 
\begin{equation}
\label{eq:lt}
\bX = \left(\begin{array}{cccc}
1 & & & \\
1 & 1 & & \\
\vdots & & & \\
1 & 1 & \ldots & 1
\end{array}\right),
\end{equation}
the lower triangular matrix of ones. Then $(\bX^T \bX)^{-1}$ is
diagonally dominant.
This arises, for example, in the one-dimensional fused lasso where we 
solve
\begin{equation*}
\argmin_\bbeta \; \half \sum_{i=1}^N (y_i-\beta_i)^2 +\lambda
\sum_{i=2}^N |\beta_i-\beta_{i-1}|. 
\end{equation*}
If we transform this problem to the parameters $\alpha_1=1$,
$\alpha_i = \beta_i-\beta_{i-1}$ for $i=2,\ldots N$, then we get a
lasso with design $\bX$ as in \eqref{eq:lt}.
\end{itemize}

\subsection{Connection to the irrepresentable condition}
The slope bound \eqref{eq:slopebound} possesses an interesting
connection to a concept called the ``irrepresentable condition''. 
Let us write $\cA$ as the set of active variables at $\lambda$,
\begin{equation*}
\cA = \{j: \hbeta_j(\lambda) \not= 0\},
\end{equation*} 
and $\|\bb\|_\infty = \max_i |b_i|$ for a vector $\bb$. Then, using
the work of \citeasnoun{lars}, we can express the slope condition
\eqref{eq:slopebound} as 
\begin{equation}
\label{eq:slopebound2}
\|\bX_{\cA^c}^T \bX_\cA (\bX_\cA^T \bX_\cA)^{-1} \sign(\hbbeta_\cA)
\|_\infty \leq 1, 
\end{equation}
where by $\bX_\cA^T$ and $\bX_{\cA^c}^T$, we really mean $(\bX_\cA)^T$
and $(\bX_{\cA^c})^T$, and the sign is applied element-wise. 

On the other hand, a common condition appearing in work about model
selection properties of lasso, in both the finite-sample and
asymptotic settings, is the so called ``irrepresentable condition''
\citeasnoun{lassomodel,sharpthresh,nearideal}, which is closely related to
the concept of ``mutual incoherence'' \citeasnoun{fuchs,tropp,lassograph}. 
Roughly speaking, if $\bbeta_\cT$ denotes the nonzero coefficients in
the true model, then the irrepresentable condition is that
\begin{equation}
\label{eq:irrcond}
\|\bX_{\cT^c}^T \bX_\cT (\bX_\cT^T \bX_\cT)^{-1} \sign(\bbeta_\cT)
\|_\infty \leq 1-\epsilon
\end{equation}
for some $0 < \epsilon \leq 1$. 

The conditions \eqref{eq:irrcond} and \eqref{eq:slopebound2} appear 
extremely similar, but a key difference between the two is 
that the former pertains to the true coefficients that generated the
data, while the latter pertains to those found by the lasso
optimization problem. Because $\cT$ is
associated with the true model, we can put
a probability distribution on it and a probability distribution on
$\sign(\bbeta_\cT)$, and then show that with high probability, certain
designs $\bX$ are mutually incoherent \eqref{eq:irrcond}. For
example, \citeasnoun{nearideal} 
suppose that  $k$ is sufficiently small, $\cT$ is drawn from the uniform
distribution on $k$-sized subsets of $\{1,\ldots p\}$, and each
entry of $\sign(\bbeta_\cT)$ is equal to $+1$ or $-1$ 
with probability $1/2$, independent of each other. 
Under this model, they show that designs $\bX$
with $\max_{j \not= k} |\bx_j^T \bx_k| = O(1/\log{p})$ satisfy the
irrepresentable condition with very high probability. 

Unfortunately the same types of arguments cannot be applied directly
to \eqref{eq:slopebound2}. A distribution on $\cT$ and
$\sign(\bbeta_\cT)$ induces a different distribution on $\cA$ and
$\sign(\hbbeta_\cA)$, via the lasso optimization procedure. Even if
the distributions of $\cT$ and $\sign(\bbeta_\cT)$ are very simple, the
distributions of $\cA$ and $\sign(\hbbeta_\cA)$ can become quite
complicated. Still, it does not seem hard to believe that confidence
in \eqref{eq:irrcond} translates to some amount of confidence in
\eqref{eq:slopebound2}. Luckily for us, we do not need the slope bound 
\eqref{eq:slopebound2} to hold exactly or with any specified level of
probability, because we are using it as a computational tool and can
simply revert to checking the KKT conditions when it fails.

\subsection{A numerical investigation of the  strong sequential  rule violations}
We generated Gaussian data with $N=100$, varying values of the number of predictors $p$ and pairwise correlation 0.5 between the predictors.
One quarter of the coefficients were non-zero, with the indices of the nonzero  predictors randomly chosen and their
values equal to
$\pm 2$. We  fit the lasso for 80 equally spaced values of $\lambda$ from $\lambda_{max}$ to 0, and
recorded the number of violations of the strong sequential rule.  Figure \ref{fig:numbreaks}
shows the  number of violations (out of $p$ predictors) averaged over 100 simulations: we plot versus the percent variance explained
instead of $\lambda$, since the former is more meaningful.
Since the  vertical axis is the  total  number of violations,
we see that violations are quite rare in general never averaging more than 0.3 out of $p$ predictors. They  are more common
near the right end of the path.
They also tend to occur when $p$ is fairly close to  $N$. When $p\gg N$  ($p=500$ or $1000$ here),
there were no violations. Not surprisingly, then, there were no violations in the numerical examples in this paper
since they all have $p\gg N$.
\begin{figure}
\begin{center}
\includegraphics[width=3.0in]{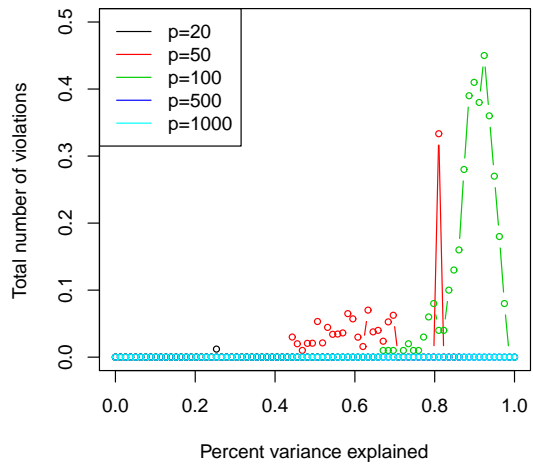}
\end{center}
\caption[fig:numbreaks]{\em Total number of violations (out of $p$ predictors) of the strong sequential rule, for
simulated data with $N=100$ and varying values of $p$. A sequence of models is fit, with decreasing values of
 $\lambda$ as we move from left to right.  The features are uncorrelated.
The results are averages over 100 simulations.}
\label{fig:numbreaks}
\end{figure}

Looking at (\ref{condKKT}),
it suggests that if we take a finer grid of $\lambda$ values, there should be fewer violations of the rule.
However we have not found this to be  true numerically: the average number of violations at each grid point $\lambda$ stays about the same.

\section{Screening rules for the elastic net}
\label{sec:en}
In the elastic net we solve the problem
\footnote{This differs from the original form of the ``naive'' elastic
net in \citeasnoun{ZH2005} by the factors of $1/2$, just for notational convenience.}
\begin{eqnarray}
{\rm minimize}\;\frac{1}{2}||\by-\bX\bbeta||^2 +\frac{1}{2}\lambda_2||\bbeta||^2+\lambda_1  ||\bbeta||_1 
\label{eqn:en}
\end{eqnarray}
Letting 
\begin{eqnarray}
\bX^*=\begin{pmatrix}
\bX\\
\sqrt{\lambda_2}\cdot\bI\\
\end{pmatrix}
;\;\;
\by^*=\begin{pmatrix}
\by\\
0\\
\end{pmatrix},
\end{eqnarray}
we can write  (\ref{eqn:en}) as
\begin{eqnarray}
{\rm minimize}\frac{1}{2}||\by^*-\bX^*\bbeta||^2 +  \lambda_1||\bbeta||_1.
\label{eqn:en2}
\end{eqnarray}
In this form we can apply the SAFE rule (\ref{eq:safe}) to obtain a rule for discarding predictors.
Now $|{\bx_j^*}^T\by^*|= |\bx_j^T\by|$, $||\bx_j^*||=
\sqrt{||\bx_j||^2+\lambda_2}$, $||\by^*||=||\by||$.
Hence  the  global rule for discarding 
predictor $j$ is
\begin{eqnarray}
 |\bx_j^T\by| < \lambda_1 -||\by||\cdot \sqrt{||\bx_j||^2+\lambda_2}\cdot \frac{\lambda_{1max}-\lambda_1}{\lambda_{1max}}
\label{eqn:englobal}
\end{eqnarray}

Note that the {\tt glmnet} package uses the parametrization
$((1-\alpha)\lambda,\alpha\lambda)$ rather than
$(\lambda_2,\lambda_1)$. With this parametrization the basic SAFE rule has the
form
\begin{eqnarray}
 |\bx_j^T\by| < \Bigl(\alpha\lambda -||\by||\cdot \sqrt{||\bx_j||^2+(1-\alpha)\lambda}\cdot \frac{\lambda_{max}-\lambda}{\lambda_{max}}\Bigr)
\label{eqn:glmnetglobal}
\end{eqnarray}

The strong screening rules turn out to be the same as for the lasso.
With the {\tt glmnet} parametrization the global rule is simply
\begin{eqnarray}
 |\bx_j^T\by| < \alpha(2\lambda - \lambda_{max})
\label{eqn:englobala}
\end{eqnarray}
while the sequential rule is
\begin{eqnarray}
 |\bx_j^T\br| < \alpha(2\lambda - \lambda_{0}).
\label{eqn:enseqa}
\end{eqnarray} 
\begin{figure}
\begin{center}
\begin{psfrags}
\psfrag{alpha=  0.1}{$\alpha=0.1$}
\psfrag{alpha=  0.5}{$\alpha=0.5$}
\psfrag{alpha=  0.9}{$\alpha=0.9$}
\includegraphics[width=5.3in]{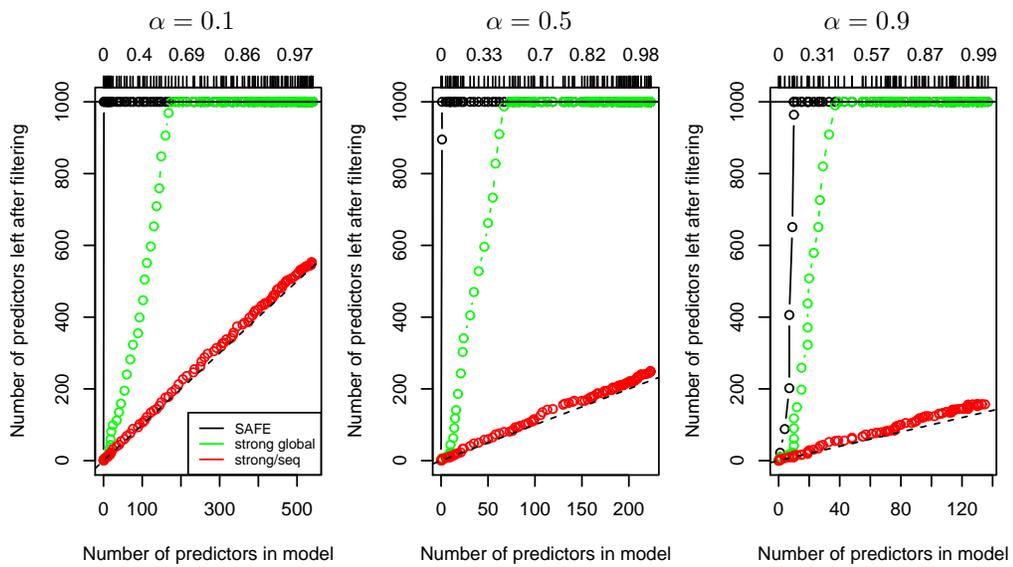}
\end{psfrags}
\end{center}
\caption[fig:en]{\em Elastic net: results for different rules
for three different values of
the mixing parameter $\alpha$.
In the plots, we are fitting along a path of decreasing $\lambda$ values
and the plots show the number of predictors left after screening
at each stage. The proportion of
variance
explained by the model is  shown along the top of the plot is shown.
There were no violations of any of the rules in the 3 scenarios.}
\label{fig:en}
\end{figure}

Figure~\ref{fig:en} show results for the elastic net with standard independent
Gaussian data, $n=100, p=1000$, for 3 values of
$\alpha$. 
 There were no violations in any of these figures,
i.e. no predictor was discarded that had a non-zero coefficient
at the actual solution.  Again we see that the
strong sequential rule performs extremely well, leaving only a small number
of excess
predictors at each stage.

\section{Screening rules for logistic regression}
\label{sec:logistic}
Here we have a binary response $y_i=0,1$ and
we assume the  logistic model
\begin{eqnarray}
{\rm Pr}(Y=1|x)=1/(1+\exp(-\beta_0-x^T\beta))
\end{eqnarray}
Letting $p_i={\rm Pr}(Y=1|x_i)$, the
 penalized log-likelihood is
\begin{eqnarray}
\ell(\beta_0,\bbeta)=-\sum_i[ y_i \log p_i +(1-y_i) \log(1-p_i)]+\lambda||\beta||_1
\end{eqnarray}

\citeasnoun{safe} derive an exact global rule for discarding predictors,
based on the inner products between $\by$ and each predictor,
using the same kind of dual argument as in the Gaussian case.

Here we investigate the analogue of the strong rules
(\ref{eq:strong}) and (\ref{eq:seqstrong}).
The subgradient equation for logistic regression is
\begin{eqnarray}
\bX^T(\by-\bp(\bbeta))=\lambda\cdot{\rm sign}(\bbeta)
\end{eqnarray}

This leads to the global rule: 
letting $\bar\bp={\bf 1}\bar y$, $\lambda_{max}={\rm max}|\bx_j^T(\by-\bar\bp)|$,
we discard  predictor $j$ if
\begin{eqnarray}
|\bx_j^T(\by-\bar\bp)|< 2\lambda-\lambda_{max}
\label{eqn:logit}
\end{eqnarray}
The sequential version, starting at $\lambda_0$, uses 
$\bp_0=\bp(\hat\beta_0(\lambda_0),\hat\bbeta(\lambda_0))$:
\begin{eqnarray}
|\bx_j^T(\by-\bp_0)|< 2\lambda-\lambda_0.
\label{eqn:logitseq}
\end{eqnarray}

\begin{figure}
\begin{center}
\includegraphics[width=3.25in]{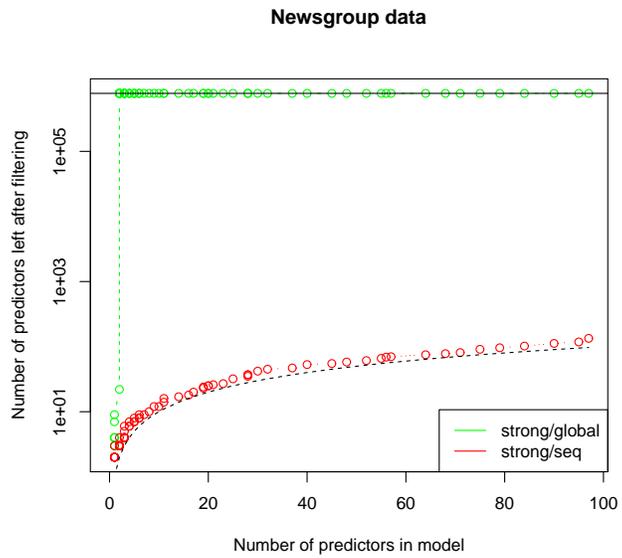}
\end{center}
\caption[fig:newsgroup]{\em Logistic regression: results for newsgroup example, using the new global rule (\ref{eqn:logit})  
and the new sequential rule (\ref{eqn:logitseq}).
The broken black curve is the 45$^o$ line, drawn on the log scale.}
\label{fig:newsgroup}
\end{figure}

Figure \ref{fig:newsgroup} show the result of various rules in an example, the newsgroup  document classification
  problem \cite{lang95:_newsw}. We used the training set cultured from these data by
  \citeasnoun{koh07:_l1}. The response is binary, and indicates a subclass
  of topics; the predictors are binary, and indicate the presence of particular tri-gram
  sequences. The predictor matrix has $0.05\%$ nonzero values.
\footnote{This dataset is available as a saved {R} data object at 
{\tt http://www-stat.stanford.edu/~hastie/glmnet}}
Results for are shown for  the new global rule (\ref{eqn:logit})  
and the new sequential rule (\ref{eqn:logitseq}).
We were unable to compute the logistic regression global  SAFE rule for this example,
using our  R language implementation, as this had a very long  computation time. 
 But in smaller examples it performed much like the global  SAFE rule in the Gaussian case.
Again we see that the strong sequential rule (\ref{eqn:logitseq}), after computing
the inner product of the residuals with all predictors at each stage, allows us to discard
the vast majority of the predictors before fitting. There were no violations of either rule in this example.

Some approaches to penalized logistic regression such as the {\tt glmnet} package
 use a weighted least squares iteration within a Newton step. For these algorithms,
an alternative 
approach to discarding predictors  would be to apply one of the Gaussian rules
within the weighted least squares iteration. 

\citeasnoun{Wu2009} used $|\bx_j^T(\by-\bar\bp)|$ to screen predictors (SNPs) in genome-wide association studies,
where the number of variables can exceed a million. Since they only anticipated models with say $k<15$ terms, they selected a small multiple, say  $10k$,  of SNPs and computed the lasso solution path to $k$ terms. All the screened SNPs could then be checked for violations to verify that the solution found was global.

\section{Strong rules for general problems}
\label{sec:general}
Suppose that we have a convex  problem of the form
\begin{eqnarray}
{\rm minimize}_\bbeta \Big[ f(\bbeta)+\lambda\cdot\sum_{k=1}^K g(\bbeta_j)\Bigr]
\end{eqnarray}
where $f$ and $g$ are convex functions, $f$ is differentiable
 and $\bbeta=(\bbeta_1,\bbeta_2, \ldots \bbeta_K)$ with each $\bbeta_k$ being a scalar or vector.
Suppose further that the subgradient equation for this problem has the form
\begin{eqnarray}
f'(\bbeta)+\lambda\cdot\bs_k=0;\; k=1,2,\ldots K
\end{eqnarray}
where each subgradient variable $\bs_k$ satisfies $||\bs_k||_q\leq A$,
and $||\bs_k||_q=A$ when the constraint $g(\bbeta_j)=0$ is satisfied
(here $||\cdot||_q$ is a norm).
Suppose that we have two values $\lambda<\lambda_0$, and corresponding
solutions $\hat\bbeta(\lambda), \hat\bbeta(\lambda_0)$.
Then following the same logic as in Section \ref{sec:strong}, we 
can derive the general strong  rule
\begin{eqnarray}
||\frac{f(\hat\bbeta_{0k}}{d\bbeta_k})||_q<  (1+A)\lambda- A\lambda_0
\label{genstrong}
\end{eqnarray}
This can be applied either globally or sequentially.
In the lasso regression setting,
it is easy to check that
this reduces to the rules   (\ref{eq:strong}),(\ref{eq:seqstrong})  where $A=1$.

The rule (\ref{genstrong}) has many potential applications.
For example in the graphical lasso for sparse inverse covariance estimation
\cite{FHT2007},
we observe
  $N$ multivariate normal observations of dimension $p$, with mean $0$
and covariance $\Sigma$,
with  observed empirical covariance matrix $S$.
Letting $\Theta=\Sigma^{-1}$,
the problem is to maximize  the penalized log-likelihood
\begin{eqnarray}
\log \det \Theta-{\rm tr}(S\Theta)-\lambda||\Theta||_1,
\label{eqn:gl}
\end{eqnarray}
over non-negative definite matrices  $\Theta$.
The penalty $||\Theta||_1$ sums the absolute values of the entries of $\Theta$;
we assume that the diagonal is not penalized.
The subgradient equation is 
\begin{equation}
\Sigma-S-\lambda\cdot \Gamma = 0,
\label{gradd}
\end{equation}
where  $\Gamma_{ij}\in{\rm
  sign}(\Theta_{ij})$.
One could apply the rule  (\ref{genstrong}) elementwise, and this
would be useful for an optimization method that operates elementwise. 
This gives a rule of the form $|S_{ij}-\hat\Sigma(\lambda_0)|< 2\lambda-\lambda_0$.
However, the graphical lasso algorithm proceeds in a blockwise fashion,
optimizing one whole row and column at a time.
Hence for the graphical lasso, it is more effective to discard entire rows and columns at once.
For each row $i$,
    let $s_{12}$, $\sigma_{12}$, and $\Gamma_{12}$ denote $S_{i,-i}$,
    $\Sigma_{i,-i}$, and $\Gamma_{i,-i}$, respectively.
Then
 the subgradient equation for one 
row has the form
\begin{equation}
\sigma_{12}-s_{12}-\lambda\cdot \Gamma_{12} = 0,
\label{gradd2}
\end{equation}
Now given two values $\lambda<\lambda_0$, and solution
$\hat\Sigma^0$ at $\lambda_0$,
we form the sequential rule
\begin{equation}
{\rm max}|\hat\sigma^0_{12}-s_{12}| <2\lambda-\lambda_0.
\label{strong:gl}
\end{equation}
If this rule is satisfied, we discard the entire $i$th row and column of $\Theta$,
and hence set them to zero (but retain the $i$th diagonal
element).
Figure \ref{glasso} shows an example with $N=100,p=300$, standard independent
Gaussian variates.
No violations of the rule occurred.
\begin{figure}
\begin{center}
\includegraphics[width=3.25in]{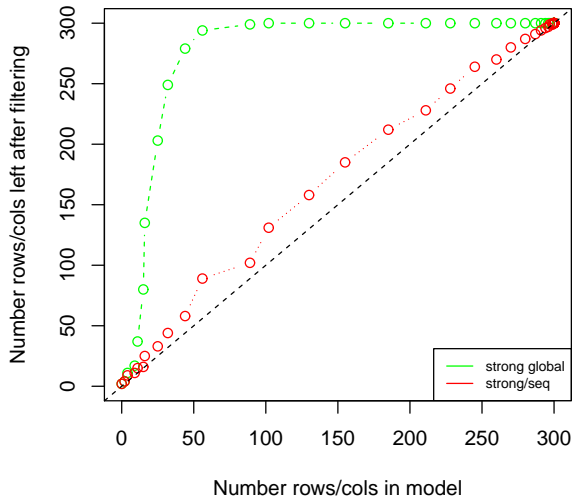}
\end{center}
\caption[glasso]{\em Strong global and sequential rules applied to  the graphical
lasso. A broken line with unit slope is added for reference.}
\label{glasso}
\end{figure}

Finally, we note that
strong rules can be derived in a similar way, for other problems such as the
 {group lasso} \cite{YL2007}.
In particular, if $\mathbf X_\ell$ denotes the
    $n\times p_\ell$ block of the design matrix corresponding to the
    features in the $\ell$th group, then the strong sequential rule is simply
    \begin{align*}
      ||\mathbf X_\ell^T\mathbf r||_2 < 2\lambda-\lambda_{\max}.
    \end{align*}
When this holds, we set $\boldsymbol\beta_\ell=\boldsymbol0$.
%
%

\section{Implementation and numerical studies}
\label{sec:glmnet}

The strong sequential  rule (\ref{eq:seqstrong}) can be used to provide potential speed
improvements in convex optimization problems. Generically, given a solution $\hat\bbeta(\lambda_0)$
and considering a new value $\lambda< \lambda_0$, let $S(\lambda)$ be the indices of the predictors
that survive the screening rule  (\ref{eq:seqstrong}): we call this the {\em strong set}. Denote by $E$ 
the eligible set of predictors.
Then a useful strategy would be
\begin{enumerate}
\item Set $E=S(\lambda)$.
\item Solve the problem at value $\lambda$ using only the predictors in $E$.
\item Check  the KKT conditions at this solution for all predictors. If there are no violations,
we are done. Otherwise add the predictors that violate the KKT conditions to the set $E$,
and repeat steps (b) and (c).
\end{enumerate}
Depending on how the optimization is done in step (b), this can be quite effective.
Now in  the {\tt glmnet} procedure,  coordinate descent  is used, with warm starts
over a grid of decreasing values of $\lambda$.
In addition, an ``ever-active'' set of predictors $A(\lambda)$ is maintained, consisting of the indices of all
predictors that have a non-zero coefficient for some $\lambda'$ greater than the
current value $\lambda$  under consideration. The solution is first found for this active set:
then the KKT conditions are checked for all predictors. 
if there are no violations, then we have the solution at $\lambda$; otherwise we add 
the violators into the active set and repeat.

The two strategies above are very similar, with one using the strong set $S(\lambda)$ and the
other using the ever-active set $A(\lambda)$.
Figure \ref{fig:active} shows the active and strong sets for an example.
\begin{figure}
\begin{center}
\includegraphics[width=5.5in]{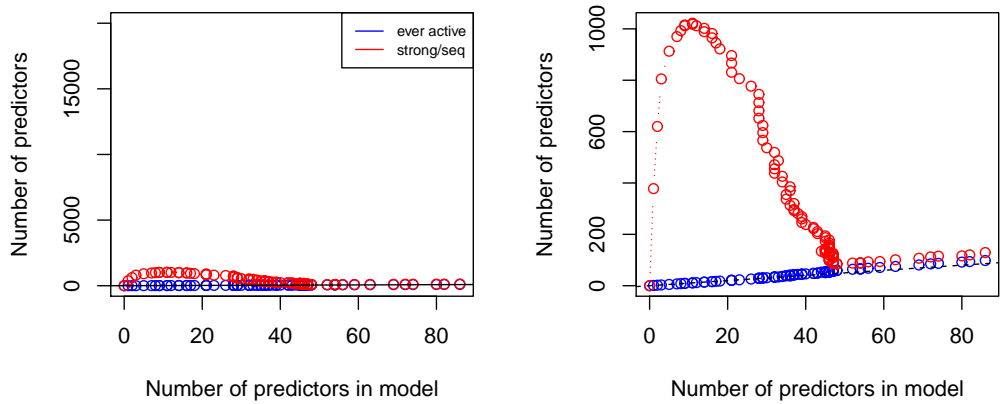}
\end{center}
\caption[fig:active]{\em  Gaussian lasso setting, $N=200, p=20,000$, pairwise correlation  between features of $0.7$.
The first 50 predictors have positive, decreasing coefficients.
Shown are the number of predictors left after applying the strong sequential rule (\ref{eq:seqstrong})
and the number that have ever been active (i.e. had a non-zero coefficient in the solution)
for values of $\lambda$ larger than the current value. A broken line with unit slope is added for reference. The right-hand plot is a zoomed version of the left plot.}
\label{fig:active}
\end{figure}
Although the strong rule greatly reduces the total number of predictors,
it  contains more predictors than the ever-active set;
accordingly, violations occur more often in the ever-active set than the  strong set.
This effect is due to the high correlation between features and the fact that the
signal variables have coefficients of the same sign. It also occurs with logistic regression with lower correlations, say 0.2.

In light of this,  we find that using both $A(\lambda)$ and $S(\lambda)$ can be advantageous.
For {\tt glmnet} we adopt the following combined strategy:
\begin{enumerate}
\item Set $E=A(\lambda)$.
\item Solve the problem at value $\lambda$ using only the predictors in $E$.
\item Check  the KKT conditions at this solution for all predictors in
$S(\lambda)$. If there are  violations, add these predictors into $E$, and go back to
step (a) using the current solution as a  warm start.
\item Check the KKT conditions for all predictors. If there are no violations, we are done.
Otherwise add these violators into $A(\lambda)$, recompute $S(\lambda)$ and go back to step (a) using the 
 current solution as a  warm start.
\end{enumerate}
Note that violations in step (c) are fairly common, while those in step (d) are  rare.
Hence the fact that the size of $S(\lambda)$ is $\ll p$ can make this an effective strategy.

We implemented this strategy and compare it to the standard {\tt glmnet} algorithm
in a variety of problems, shown in Tables
1--3.
Details are given in the table captions.
We see that the  new strategy offers a speedup factor of five or more in some cases,
and never seems to slow things down.

The strong sequential rules also have the potential for space savings.
With a large dataset,  one could compute the inner products $\{\bx_j^T\br\}_1^p$
offline to determine the strong set of predictors, and then
carry out the intensive optimization steps in memory using just this
subset of the predictors.
\section{Discussion}
\label{sec:discussion}
In this paper we have proposed strong global and sequential rules for
discarding predictors in statistical convex optimization problems such as the lasso.
When combined with checks of the KKT conditions,
these  can offer substantial improvements in speed while still yielding the exact solution.
We plan to include these rules in a future version of the  {\tt glmnet} package.

The RECSAFE method uses the solution at a given point $\lambda_0$
to derive a rule for discarding predictors at $\lambda<\lambda_0$.
Here is another way to 
(potentially) apply the SAFE rule in  a sequential manner.
Suppose that we have $\hat\bbeta_0=\hat\bbeta(\lambda_0)$,
and $\br=\by-\bX\hat\bbeta_0$,
and we consider the fit at
 $\lambda<\lambda_0$, with
$\br=\by-\bX\hat\bbeta_0$.
Defining
\begin{eqnarray}
\lambda_{0}&=&{\rm max}_j(|\bx_j^T\br|);
\end{eqnarray}
we  discard predictor $j$ if
\begin{eqnarray}
|\bx_j^T\br|< \lambda- ||\br|| |\bx_j||
\frac{\lambda_0-\lambda}{\lambda_0}
\label{cond2}
\end{eqnarray}
We have been unable to prove the correctness of this rule,
and do not know if it is infallible.
At the same time,
we have been not been able to find a numerical example in which it fails.

\medskip

{\bf Acknowledgements:} 
We thank Stephen Boyd for his comments,
and Laurent El Ghaoui and his
co-authors for sharing their paper with us before publication,
and for helpful feedback on their work.
The first author was supported by National
Science Foundation Grant DMS-9971405 and National Institutes of Health
Contract N01-HV-28183.
\bibliographystyle{agsm}
\bibliography{strong,/home/hastie/bibtex/tibs,/home/hastie/docs/mrc/bibtex/mrc.bib,/home/tibs/texlib/tibs,/home/hastie/docs/resume/trevor}

\newpage
\begin{table}
\caption[tab1]{\em Glmnet timings (seconds) for  fitting a lasso problem in the Gaussian setting.
In the first four columns, there are $p=100,000$ predictors, $N=200$ observations, 30 nonzero coefficients, with the same value and  signs alternating; signal-to-noise ratio equal to 3.
In the rightmost column, the data matrix is sparse, consisting of just zeros and ones, with $0.1\%$ of the values equal to 1.
There are $p=50,000$ predictors, $N=500$ observations, with 25\% of the  coefficients nonzero,
having a Gaussian distribution; signal-to-noise ratio equal to 4.3.}
\label{tab1}
\centering
\fbox{%
\begin{tabular}{l|rrrrr}
Method& \multicolumn{4}{c}{Population correlation}\\ 
&                         0.0  &               0.25  &             0.5  &             0.75  & Sparse\\
\hline
glmnet&                   4.07        &      6.13      &        9.50       &     17.70 & 4.14\\
with seq-strong &       2.50  &               2.54     &        2.62       &       2.98 &2.52\\
\end{tabular}}
\end{table}

\begin{table}
\caption[tab3]{\em  Glmnet timings (seconds) for  fitting an elastic net  problem.
There are $p=100,000$ predictors, $N=200$ observations, 30 nonzero coefficients, with the same value and  signs alternating; signal-to-noise ratio equal to 3 }
\label{tab3}
\centering
\fbox{%
\begin{tabular}{l|rrrrr}
Method& \multicolumn{5}{c}{$\alpha$}\\
&                         1.0  &               0.5  &             0.2  &      0.1  &     0.01\\

\hline
glmnet&                      9.49        &       7.98  &            5.88  &          5.34    &        5.26\\
with seq-strong &         2.64           &    2.65    &          2.73     &       2.99  &          5.44\\
\end{tabular}}
\end{table}

\begin{table}
\caption[tab4]{\em  Glmnet timings (seconds)  fitting a lasso/logistic regression problem.
Here the data matrix is sparse, consisting of just zeros and ones, with $0.1\%$ of the values equal to 1.
There are $p=50,000$ predictors, $N=800$ observations, with 30\% of the  coefficients nonzero,
with the same value and  signs alternating; Bayes error equal to 3\%.}
\label{tab4}
\fbox{%
\centering
\begin{tabular}{l|rrrr}
Method& \multicolumn{4}{c}{Population correlation}\\
&                         0.0  &                 0.5  &             0.8\\
\hline
glmnet            &      11.71      &      12.41 &     12.69\\ 
with seq-strong    &     6.31      &      9.491 &     12.86\\
\end{tabular}}
\end{table}

\end{document}